\documentclass[11pt]{article}

\usepackage[a4paper,margin=1.15in]{geometry}
\usepackage{amsmath,amssymb,amsthm,mathtools}
\usepackage{hyperref}
\usepackage{mathrsfs}
\usepackage{tikz-cd}
\usetikzlibrary{decorations.markings, arrows.meta}
\usetikzlibrary{fit}

\hypersetup{
  colorlinks=true,
  linkcolor=blue,
  citecolor=blue,
  urlcolor=blue
}

\usepackage{xcolor}

\newtheorem{theorem}{Theorem}[section]
\newtheorem{proposition}[theorem]{Proposition}
\newtheorem{lemma}[theorem]{Lemma}

\theoremstyle{definition}
\newtheorem{definition}[theorem]{Definition}
\newtheorem{remark}[theorem]{Remark}
\newtheorem*{definition*}{Definition}
\newtheorem*{lemma*}{Lemma}

\newcommand{\R}{\mathbb{R}}

\newcommand{\Sig}{\operatorname{Sig}}

\newcommand{\id}{\mathbf{1}}

\newcommand{\proj}[1]{\pi_{#1}}
\newcommand{\wind}{\operatorname{Wind}}

\title{An Elementary Proof of the Hambly--Lyons \\ Uniqueness Theorem}
\author{Josef Teichmann, Walter Schachermayer, Valentin Tissot-Daguette}
\date{}

\author{
Walter Schachermayer\thanks{
  Faculty of Mathematics, University of Vienna, walter.schachermayer@univie.ac.at} \hspace{1em}
  Josef Teichmann\thanks{
  Department of Mathematics, ETH Zurich, jteichma@math.ethz.ch} \hspace{1em}
Valentin Tissot-Daguette\thanks{
  Quantitative Research, Office of the CTO, Bloomberg,  vtissotdague@bloomberg.net}  
}
\date{\today}

\begin{document}

\maketitle

\begin{abstract}
We give a self-contained proof, in the bounded variation setting, of the Hambly--Lyons uniqueness theorem, which states that (total) signature identifies the path up to tree-like equivalences. The argument is organized around two key 
 geometric observations. First, tree-like paths have trivial signature because factorization over a loop in a tree $\tau:[0,1]\to T$ is preserved under signature lifts, which follows from an elementary property of planar curves. 
Second, a path with trivial total signature contains a nontrivial subpath with trivial total signature (the sub-interval lemma). This is proven by applying a winding-number argument to a two-dimensional projection of the signature lift.  
Collapsing all trivial-signature sub-intervals then defines a compact metric tree $T$ through which the original path factors by virtue of the sub-interval lemma.
\end{abstract}

\section{Introduction}

Let \(\gamma:[0,1]\to\R^d\) be a continuous path of bounded variation. Its signature records all iterated integrals of \(\gamma\) over $[0,1]$ (see Section \ref{sec:signatures}). The signature is invariant under (monotone increasing) reparametrizations of $[0,1]$ and shifts in $\mathbb{R}^d$, and does not change 
if tree-like excursions are attached to the paths. If we fix the starting point of $\gamma$, then the corresponding geometric equivalence relation is tree-like equivalence (see Section \ref{sec:tree-like}), as established in  the  seminal work of Ben Hambly and Terry Lyons, see \cite{HL2010}. The theorem says precisely that for continuous bounded variation curves with $\gamma(0)=0$,  
it holds that
\[
  \Sig_{0,1}(\gamma)= \id
  \quad\Longleftrightarrow\quad
  \gamma \text{ is tree-like} \, .
\]
The corresponding assertion for Banach space valued rough paths has been proven in the impressive work \cite{BGLY2016}. It is the goal of this article to provide an argument that makes the challenging direction from trivial signature to tree-like elementary in full generality.

Signatures also constitute the complete list of reparametrization-invariant polynomials on the Banach space of bounded variation curves, which will be proven elsewhere. This allows us to draw conclusions from properties of well-known families of real analytic functions on the Banach space of bounded variation curves, like winding numbers around points, on the signatures of the underlying curves. This lies at the core of our proof.

We first recall the extended tensor algebra and the time-indexed signature \(\Sig_{s,t}(\gamma)\) of a path $\gamma$. We then introduce tree-like paths. The implication from tree-like paths to trivial signature is proved by induction. 
For the converse, we use a winding-number argument on a two-dimensional projection of the signature curve
\[
  t\longmapsto \Sig_{0,t}(\gamma),
\]
followed by a quotient construction of the desired tree.

\section{Extended tensor algebra and signatures}\label{sec:signatures}

The extended tensor algebra over $\R^d$ is defined
\[
  T((\R^d)):=\prod_{n=0}^{\infty} (\R^d)^{\otimes n},
  \qquad (\R^d)^{\otimes 0}:=\R \, .
\]
An element $ a\in T((\R^d)) $ is therefore given by a sequence
\[
  a=(a^{(0)},a^{(1)},a^{(2)},\ldots),
  \qquad a^{(n)}\in (\R^d)^{\otimes n} \, ,
\]
where $T((\mathbb{R}^d))$ inherits the structure of a locally convex topological vector space from $\mathbb{R}^\mathbb{N}$.
Multiplication is the Cauchy tensor product
\[
  (ab)^{(n)}:=\sum_{k=0}^n a^{(k)}\otimes b^{(n-k)}.
\]
The unit is defined as
\[
\id  :=(1,0,0,\ldots).
\]
For $N\geq 0$, we write
\[
  T^{(N)}(\R^d):=\prod_{n=0}^N (\R^d)^{\otimes n}
\]
for the truncated tensor algebra, and denote
\[
  \proj{N}:T((\R^d))\to T^{(N)}(\R^d)
\]
for the canonical projection. We shall use the same symbol $\id$ for the unit in any truncation.

Let $e_1,\ldots,e_d$ be the canonical basis of $\R^d$. A word $w=i_1\cdots i_n$ is a tuple (written as an ordered product here) identified with the basis tensor in $(\mathbb{R}^d)^{\otimes n}$ and is conveniently written as
\[
e_w = e_{i_1}\cdots e_{i_n}  :=e_{i_1}\otimes\cdots\otimes e_{i_n}.
\]
The corresponding coordinate projection of $a\in T((\R^d))$ is denoted by $a^w \in \mathbb{R}$, leading more generally to the map  
$$
.^w : T((\mathbb{R}^d)) \to \mathbb{R} \, .
$$
We furthermore write  $ T(\mathbb{R}^d) $ for the space of linear combinations of all coordinate projections $ a \mapsto a^w $ over all words $ w $, which is the topological dual space of the locally convex vector space $T((\mathbb{R}^d))$. It carries the locally convex topology of $\mathbb{R}^{(\mathbb{N})}$.

Let $\gamma:[0,1]\to\R^d$ be continuous and of bounded variation. For every sub-interval $[s,t]\subset[0,1]$, $s\leq t$, and every word $w=i_1\cdots i_n$, we define the signature component associated to $w$ as
\[
  S^w_{s,t}(\gamma)
  :=
  \int_{s<u_1<\cdots<u_n<t}
  d\gamma^{i_1}_{u_1}\cdots d\gamma^{i_n}_{u_n} \, .
\]
For the empty word \(\emptyset\), we set \(S^{\emptyset}_{s,t}(\gamma)=1\). The time-indexed signature over \([s,t]\) is then
\[
  \Sig_{s,t}(\gamma)
  :=
  \sum_w S^w_{s,t}(\gamma)e_w
  =
  \id +
  \sum_{n\geq 1}\sum_{i_1,\ldots,i_n=1}^d
  S^{i_1\cdots i_n}_{s,t}(\gamma)e_{i_1}\cdots e_{i_n}
  \in T((\R^d)) \, ,
\]
for $s \leq t$, as an element of the extended tensor algebra $T((\mathbb{R}^d))$. We write
\[
  \Sig^{\leq N}_{s,t}(\gamma):=\proj{N}\Sig_{s,t}(\gamma)
\]
for the truncated (at level $N$) signature and call
\[
t \mapsto  \Gamma_t:=\Sig_{0,t}(\gamma) \in T((\mathbb{R}^d))
\]
the \emph{signature curve} of $\gamma$ on the interval $[0,1]$. It is a continuous $T((\R^d))$-valued curve, which connects $\id$ and $\Sig_{0,1}(\gamma)$. As an example, the first level satisfies
\[
  S^i_{s,t}(\gamma)=\gamma^i_t-\gamma^i_s \, .
\]
In the special case $d=2$ and $\gamma(0)=\gamma(1)$, the anti-symmetric part of the second level is the signed area swept out by $ t \mapsto \gamma_t $
\begin{equation}\label{eq:LevyArea}
      A(\gamma)
  :=
  \frac12\left(\int_0^1 \gamma^1_u\,d\gamma^2_u
  -\int_0^1 \gamma^2_u\,d\gamma^1_u\right) \, ,
\end{equation}
otherwise called \emph{Lévy area}.

We call $ \Sig_{0,1}(\gamma)$ the (total) signature of the curve $\gamma$. In particular, it holds that $\Gamma$ connects $\id$ and the total signature in $T((\R^d))$, i.e., $\Gamma_1=\Sig_{0,1}(\gamma)$. The natural question arises concerning which properties of $\gamma$ are characterized by its total signature. A complete answer has been given in \cite{HL2010}.

We shall use several basic properties of signatures of continuous bounded variation curves starting at $0$:

\begin{proposition}[Chen's identity \cite{C1954}]\label{prop:chen}
For $ 0\leq r, s, t \leq 1$,
\[
  \Sig_{r,t}(\gamma)=\Sig_{r,s}(\gamma)\Sig_{s,t}(\gamma).
\]
In particular, every $\Sig_{s,t}(\gamma)$ is invertible in $T((\mathbb{R}^d))$. For $ s,t \in [0,1]$, we have
\[
  \Sig_{s,t}(\gamma):=\Sig_{t,s}(\gamma)^{-1} \, .
\]
Chen's identity also holds for truncated signatures of all orders.
\end{proposition}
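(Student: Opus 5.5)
We prove Chen's identity first for $r\le s\le t$, componentwise on words, then extend to arbitrary orderings using the definition $\Sig_{s,t}(\gamma):=\Sig_{t,s}(\gamma)^{-1}$ for $s>t$.

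\textbf{Step 1: the ordered case $r\le s\le t$.} Fix a word $w=i_1\cdots i_n$. Up to a Lebesgue--Stieltjes null set (the hyperplanes $u_j=s$, which carry no mass because $\gamma$ is continuous and so $d\gamma^{i}$ has no atoms), the simplex $\{r<u_1<\cdots<u_n<t\}$ is the disjoint union over $k=0,\dots,n$ of the products
\[
\{r<u_1<\cdots<u_k<s\}\times\{s<u_{k+1}<\cdots<u_n<t\}.
\]
The product measure $d\gamma^{i_1}\cdots d\gamma^{i_n}$ is a product of finite signed measures. Fubini therefore gives
\[
S^w_{r,t}(\gamma)=\sum_{k=0}^n S^{i_1\cdots i_k}_{r,s}(\gamma)\,S^{i_{k+1}\cdots i_n}_{s,t}(\gamma).
\]
This is exactly the $e_w$-coefficient of the Cauchy product $\Sig_{r,s}(\gamma)\Sig_{s,t}(\gamma)$, since $(ab)^{(n)}=\sum_k a^{(k)}\otimes b^{(n-k)}$ and $e_{i_1\cdots i_k}\otimes e_{i_{k+1}\cdots i_n}=e_w$. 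Truncated Chen follows at once, because $\proj{N}$ is an algebra homomorphism from $T((\R^d))$ onto $T^{(N)}(\R^d)$ with the truncated product: level-$n$ components of a product only involve levels at most $n$.

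\textbf{Step 2: invertibility.} Every element $a$ with $a^{(0)}=1$ is invertible in $T((\R^d))$. Writing $a=\id+b$ with $b^{(0)}=0$, the inverse is $a^{-1}=\sum_{m\ge0}(-b)^m$. This series is well defined because $b^m$ has no components below level $m$, so each level receives only finitely many terms. Signatures have level zero equal to $1$, hence $\Sig_{s,t}(\gamma)$ is invertible for $s\le t$. For $s>t$ the definition $\Sig_{s,t}(\gamma):=\Sig_{t,s}(\gamma)^{-1}$ is then meaningful and yields an invertible element. This definition is consistent on the diagonal, since $\Sig_{s,s}(\gamma)=\id$: all integrals over the degenerate simplex vanish.

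\textbf{Step 3: general $r,s,t$.} Consider the six orderings of $r,s,t$. Each case follows from Step 1 applied to the sorted triple, combined with the inverse definition. For example, if $s\le r\le t$, Step 1 gives $\Sig_{s,t}=\Sig_{s,r}\Sig_{r,t}$. Hence
\[
\Sig_{r,t}=\Sig_{s,r}^{-1}\Sig_{s,t}=\Sig_{r,s}\Sig_{s,t}.
\]
The remaining cases are analogous manipulations in the group of elements with level zero equal to $1$.

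\textbf{Main obstacle.} The only substantive point is the decomposition and Fubini step in Step 1. There one must justify that the boundary sets $\{u_j=s\}$ are null for the product signed measure. This holds because continuity of $\gamma$ makes each $d\gamma^{i}$ atomless.
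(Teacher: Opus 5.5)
Your proof is correct, but it takes a different route from the paper's. The paper never decomposes simplices. In the remark following the proposition, it identifies $\Sig_{s,t}(\gamma)$, for all $s,t\in[0,1]$, with the evolution of the linear equation $dx_u=\sum_i x_u e_i\,d\gamma^i_u$, $x_s=\id$. Uniqueness of solutions (the system is triangular level by level) then gives the semigroup, or flow, property of this evolution. That property is Chen's identity for all orderings of $r,s,t$ simultaneously. Invertibility follows immediately from $\Sig_{s,t}(\gamma)\Sig_{t,s}(\gamma)=\Sig_{s,s}(\gamma)=\id$, and the truncated case is the same argument run on $T^{(N)}(\R^d)$.

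You instead prove the coefficient identity $S^w_{r,t}=\sum_k S^{i_1\cdots i_k}_{r,s}S^{i_{k+1}\cdots i_n}_{s,t}$ directly, by splitting the simplex, using that each $d\gamma^i$ is atomless, and applying Fubini. You then build inverses by the Neumann series and need a six-case check to reach arbitrary orderings; that check does go through, including the diagonal cases you flagged. Your route buys a fully explicit, measure-theoretic argument that exposes the deconcatenation structure of the signature and needs no ODE uniqueness. The paper's route buys brevity and no case analysis. It is also robust: it uses only uniqueness for a linear controlled equation, so it transfers to Young or rough drivers, where iterated integrals are no longer integrals against a product of Lebesgue--Stieltjes measures.
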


\begin{remark}
Chen's identity follows from the fact that $\Sig_{s,t}(\gamma)$ for $ s, t \in [0,1]$ is the unique evolution associated to the solution of the ordinary differential equation
$$
d x_u = \sum_{i=1}^d x_u e_i d\gamma^i_u, \; \;   x_s = \id  \;,
$$
on $T((\mathbb{R}^d))$, and  therefore satisfies a semigroup property, which is precisely Chen's identity. The same argument applies to truncated signatures.
\end{remark}

Furthermore, the signature is obviously reparametrization-invariant:

\begin{proposition}[reparametrization invariance]\label{prop:reparam}
Fix $ s < t $ in $[0,1]$. Let $\phi:[0,1]\to[s,t]$ be continuous, increasing and onto. Then
\[
  \Sig_{0,1}(\gamma\circ\phi)=\Sig_{s,t}(\gamma) \, .
\]
\end{proposition}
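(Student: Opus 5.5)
The plan is to compare the iterated integrals of $\gamma\circ\phi$ over $[0,1]$ with those of $\gamma$ over $[s,t]$ one word at a time, using the change-of-variables formula for Riemann--Stieltjes integrals. Since both sides are given by their coordinates $S^w$, it suffices to show $S^w_{0,1}(\gamma\circ\phi)=S^w_{s,t}(\gamma)$ for every word $w=i_1\cdots i_n$. For $n=0$ both sides equal $1$.

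For the general case I will use the recursive description of iterated integrals rather than the simplex integral directly. Define for $u\in[s,t]$
\[
X^{(k)}_u := S^{i_1\cdots i_k}_{s,u}(\gamma), \qquad X^{(k)}_u=\int_s^u X^{(k-1)}_r\,d\gamma^{i_k}_r,\qquad X^{(0)}\equiv 1.
\]
Similarly define $Y^{(k)}_v := S^{i_1\cdots i_k}_{0,v}(\gamma\circ\phi)$ for $v\in[0,1]$. These are continuous and of bounded variation, and $\gamma\circ\phi$ is again continuous and of bounded variation, with variation at most that of $\gamma$ on $[s,t]$, because $\phi$ is monotone.

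I then prove by induction on $k$ that $Y^{(k)}_v=X^{(k)}_{\phi(v)}$ for all $v$. The inductive step is the identity
\[
\int_0^v f(\phi(r))\,d(\gamma^{i}\circ\phi)_r=\int_{s}^{\phi(v)} f(u)\,d\gamma^i_u
\]
for continuous $f$, here $f=X^{(k-1)}$. I would verify this on Riemann--Stieltjes sums. A partition $0=r_0<\dots<r_m=v$ maps to the points $\phi(r_0)\le\dots\le\phi(r_m)$. After discarding repeated points, which contribute zero increments, these form a partition of $[s,\phi(v)]$ using the surjectivity and continuity of $\phi$. The Riemann--Stieltjes sum $\sum f(\phi(r_j))(\gamma^i(\phi(r_{j+1}))-\gamma^i(\phi(r_j)))$ is exactly a Riemann--Stieltjes sum for the right-hand side with tags at left endpoints.

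The one point needing care, which I expect to be the main (mild) obstacle, is that mesh going to zero on $[0,1]$ forces mesh going to zero on $[s,\phi(v)]$. This holds because $\phi$ is uniformly continuous on the compact interval $[0,1]$. Since the integrals exist (continuous integrand, bounded variation integrator), the sums converge on both sides and the identity follows. Taking $k=n$ and $v=1$, with $\phi(1)=t$ by monotonicity and surjectivity, gives $S^w_{0,1}(\gamma\circ\phi)=S^w_{s,t}(\gamma)$, hence the claim.
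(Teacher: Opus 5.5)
Your proof is correct. The paper gives no argument at all and simply calls the invariance obvious; your induction on word length via the Riemann--Stieltjes change-of-variables identity is exactly the standard justification being taken for granted. This includes discarding the repeated image points where $\phi$ is flat, using $\phi(0)=s$, and using uniform continuity of $\phi$ to transfer the mesh.
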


\begin{proposition}\label{prop:SW}
Consider the continuous functions $C(\Gamma([0,1]))$ on the range 
\begin{equation}\label{eq:SigRange}
    \Gamma([0,1]) = \{\Gamma_t, \ t \in [0,1]\}, 
\end{equation}
which is a compact subset of the locally convex space $T((\R^d))$. Then the restrictions of linear functionals, i.e., the restrictions of elements of $T(\mathbb{R}^d)$ on $\Gamma([0,1])$,  form a point separating sub-algebra and are therefore dense in $C(\Gamma([0,1]))$ with respect to the uniform topology. 
\end{proposition}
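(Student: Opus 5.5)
The statement follows from the Stone--Weierstrass theorem once four facts are in place: $\Gamma([0,1])$ is compact, the restricted functionals form an algebra, this algebra contains the constants, and it separates points. I will check these in order.

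\emph{Compactness.} Each coordinate $t\mapsto S^w_{0,t}(\gamma)$ is continuous, being an iterated integral against a continuous bounded variation path. Hence $t\mapsto\Gamma_t$ is continuous into $T((\R^d))$ with the product topology, and the image of the compact interval $[0,1]$ is compact. On a compact set the product topology is metrizable, so this is a compact Hausdorff space and Stone--Weierstrass applies to real-valued functions on it.

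\emph{Algebra and constants.} Constants come from the empty word, since $\Gamma_t^{\emptyset}=1$ for all $t$. The algebra property is the main point. A product of two linear functionals is not linear on all of $T((\R^d))$, but it is linear on group-like elements. For every $t$ I will use the shuffle identity
\[
\Gamma_t^{u}\,\Gamma_t^{v}=\Gamma_t^{u\,\sqcup\!\sqcup\, v}.
\]
I would prove it by induction on $|u|+|v|$. Write $u=u'i$ and $v=v'j$. Then $\tfrac{d}{dt}$ of $S^{u}_{0,t}$ equals $S^{u'}_{0,t}\,\dot\gamma^i_t$, and Leibniz's rule for Stieltjes integrals of continuous bounded variation functions gives
\[
S^uS^v=\int \bigl(S^{u'}S^v\bigr)\,d\gamma^i+\int \bigl(S^uS^{v'}\bigr)\,d\gamma^j.
\]
This is exactly the recursion defining $u\sqcup\!\sqcup v=(u'\sqcup\!\sqcup v)i+(u\sqcup\!\sqcup v')j$. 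By bilinearity, the restrictions of elements of $T(\R^d)$ are therefore closed under pointwise multiplication on $\Gamma([0,1])$. This is the step needing the most care; the rest is formal.

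\emph{Point separation.} Let $\Gamma_s\neq\Gamma_t$ be two distinct points of the range. Two distinct elements of $T((\R^d))$ differ in some coordinate $w$, so the functional $a\mapsto a^w$ separates them. Note that we separate the points $\Gamma_s,\Gamma_t$ of the range, not the times $s,t$. This point matters because $\Gamma$ need not be injective.

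\emph{Conclusion.} By Stone--Weierstrass, the subalgebra is uniformly dense in $C(\Gamma([0,1]))$.
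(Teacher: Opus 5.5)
Your proof is correct and follows the same route as the paper: Stone--Weierstrass, with the algebra property coming from the Leibniz rule (which you make explicit as the shuffle identity, proved by induction) and point separation via coordinate functionals. You also spell out compactness and the presence of constants, which the paper leaves implicit; one cosmetic point is that for a merely continuous bounded variation path $\dot\gamma$ need not exist as a density, so the recursion should be phrased as $dS^u_{0,t}=S^{u'}_{0,t}\,d\gamma^i_t$, which is what your Stieltjes integration-by-parts argument actually uses.
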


\begin{proof}
Since products of signature components are linear combinations of signature components by the Leibniz rule, the restriction of linear functionals on $\Gamma([0,1])$ forms an algebra. This sub-algebra of the algebra of continuous functions is furthermore point separating since the dual space $T(\R^d)$ is point separating as well. Therefore, we can conclude by the Stone--Weierstrass theorem. 
\end{proof}

\begin{lemma}[Linear one-form lemma]\label{lem:polynomial-one-form}
Let $ \ell_1, \ell_2 \in T(\mathbb{R}^d) $ be two linear functionals on $T((\mathbb{R}^d))$. Then
\[
  \int_0^1 \ell_1(\Gamma_t)\,d \ell_2(\Gamma_t)
\]
is a finite linear combination of total signature components of $ \gamma $, i.e., there exists $ \ell \in T(\mathbb{R}^d)$ such that the above integral equals $\ell(\Gamma_1)$. Clearly the signature component of the empty word does not appear in this linear combination.
\end{lemma}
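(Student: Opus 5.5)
By linearity in $\ell_1$ and $\ell_2$, it suffices to treat $\ell_1 = (\cdot)^v$ and $\ell_2 = (\cdot)^w$ for two words $v$ and $w$. In that case the integral is
\[
  \int_0^1 S^v_{0,t}(\gamma)\, dS^w_{0,t}(\gamma).
\]
If $w=\emptyset$, then $S^w_{0,t}\equiv 1$ and the integral vanishes, so we may take $\ell=0$. Assume therefore that $w=w'j$ is nonempty, with last letter $j$.

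The key step uses the iterated-integral definition. Writing $S^w_{0,t} = \int_0^t S^{w'}_{0,u}\,d\gamma^j_u$ gives
\[
  dS^w_{0,t}(\gamma) = S^{w'}_{0,t}(\gamma)\,d\gamma^j_t ,
\]
as an identity of bounded-variation Stieltjes integrators: $t\mapsto S^w_{0,t}$ is an indefinite integral against $\gamma^j$ with a continuous integrand. Consequently
\[
  \int_0^1 S^v_{0,t}\, dS^w_{0,t} = \int_0^1 S^v_{0,t}\,S^{w'}_{0,t}\,d\gamma^j_t .
\]

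Next we apply the shuffle (Leibniz) identity already invoked in the proof of Proposition~\ref{prop:SW}:
\[
  S^v_{0,t}\,S^{w'}_{0,t} = \sum_{u} c_u\, S^u_{0,t},
\]
where $u$ ranges over the shuffles of $v$ and $w'$, with integer multiplicities $c_u$. Integrating against $d\gamma^j$ and using the definition once more gives
\[
  \int_0^1 S^u_{0,t}\,d\gamma^j_t = S^{uj}_{0,1}(\gamma).
\]
Hence the integral equals
\[
  \sum_u c_u\, \Gamma_1^{uj} = \ell(\Gamma_1), \qquad \ell:=\sum_u c_u (\cdot)^{uj} \in T(\R^d).
\]
Every word $uj$ has length at least one, so the empty word never appears. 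Summing over the finitely many pairs $(v,w)$ in the expansions of $\ell_1$ and $\ell_2$ completes the argument.

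The only point requiring care is the shuffle identity. If it is not taken as given, I would prove it by induction on $|v|+|w'|$ using the integration-by-parts formula for continuous bounded-variation functions: there is no bracket term because the functions are continuous, and the last letters peel off recursively. Justifying $dS^w_{0,t}=S^{w'}_{0,t}\,d\gamma^j_t$ as a chain rule for Stieltjes integrals is routine.
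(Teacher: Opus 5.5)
Your proposal is correct and follows the same route as the paper, whose proof says in one line that products of iterated integrals are linear combinations of iterated integrals (Leibniz/shuffle rule), and that integrating one iterated integral against another again gives iterated integrals over $[0,1]$. You make this explicit by writing $dS^{w'j}_{0,t}=S^{w'}_{0,t}\,d\gamma^j_t$ and applying the shuffle identity, which produces the concrete universal functional $\ell=\sum_u c_u(\cdot)^{uj}$ and shows directly why the empty word never appears.
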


\begin{proof}
Each $w$-coordinate of $\Gamma_t$, i.e., $\Gamma_t^w = S^w_{0,t}(\gamma) \in \R$, $w = i_1\cdots i_n$,
is an iterated integral of $ \gamma $ over $[0,t] $. Since products of iterated integrals are linear combinations of iterated integrals by the Leibniz rule, the integration of an iterated integral against an iterated integral gives again a finite linear combination of iterated integrals of $ \gamma $ over $ [0,1] $. Hence the value of the integral is a finite linear combination of total signature components.
\end{proof}


In the planar case ($d = 2$), the following proposition relates the Lévy area to the winding numbers of $\gamma$, defined as  
$$\textnormal{Wind}_{z_0}(\gamma) = \frac{1}{2 \pi \mathrm{i}} \int_{\gamma} \frac{1}{z-z_0} d z , \quad z_0 \in \mathbb{C} \setminus \gamma([0,1]) \, .$$
\begin{proposition}\label{prop:levyarea_winding}
Let $d=2$ and $\gamma(0)=\gamma(1)$. Then the Lévy area \eqref{eq:LevyArea} can be expressed as  
\begin{equation}\label{eq:LevyvsWinding}
    A(\gamma) = \int_{\mathbb{C} \setminus \gamma([0,1])}  \textnormal{Wind}_{z_0}(\gamma) \, \lambda(dz_0),
\end{equation}
where $\lambda$ denotes the planar Lebesgue measure. 
\end{proposition}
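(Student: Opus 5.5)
The plan is to use Fubini. We write the Lévy area as a line integral of a one-form and express the integrand pointwise through winding numbers.

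First, identify $\R^2$ with $\mathbb{C}$. For a point $z=x+\mathrm{i}y$ set $\omega=\tfrac12(x\,dy-y\,dx)$, so that $A(\gamma)=\int_\gamma\omega$. The key identity is that for every $z\notin\gamma([0,1])$ and every $z_0$,
\[
\textnormal{Wind}_{z_0}(\gamma)=\frac{1}{2\pi}\int_\gamma d\arg(z-z_0),
\]
which is real for a closed bounded-variation curve. Equivalently, $\textnormal{Wind}_{z_0}(\gamma)$ is the imaginary part of $\frac{1}{2\pi}\int_\gamma \frac{dz}{z-z_0}$, its real part being zero since the curve is closed. The form $d\arg(z-z_0)$ equals
\[
\frac{(x-x_0)\,dy-(y-y_0)\,dx}{|z-z_0|^2}.
\]
So we would show the distributional identity
\[
\int_{\R^2}\frac{(x-x_0)\,dy-(y-y_0)\,dx}{2\pi|z-z_0|^2}\,\lambda(dz_0)=\omega
\]
in a form that pairs correctly with $\gamma$. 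This identity diverges as it stands, so instead we integrate over a large disc $B_R$ containing the curve.

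Concretely, we compute $h_R(z):=\frac{1}{2\pi}\int_{B_R}\frac{z-z_0}{|z-z_0|^2}\,\lambda(dz_0)$ for $|z|<R$. This is the field of a uniform charge on a disc, and by Gauss's law (radial symmetry) it equals $\tfrac{1}{2}z$ (as a vector $(x,y)/2$). The real-vector version of the form above is $J(z-z_0)/|z-z_0|^2$ with $J$ the rotation by $\pi/2$, so integrating over $z_0\in B_R$ gives exactly $\tfrac12(x\,dy-y\,dx)=\omega$.

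Next, we justify exchanging $\int_\gamma$ and $\int_{B_R}\lambda(dz_0)$. The kernel $|z-z_0|^{-1}$ is locally integrable in $z_0$ uniformly in $z$. Since $\gamma$ has bounded variation, $\int_{B_R}\int_0^1|\gamma_t-z_0|^{-1}\,|d\gamma_t|\,\lambda(dz_0)\le C_R\,\mathrm{Var}(\gamma)<\infty$, and Fubini applies. This gives
\[
A(\gamma)=\int_{B_R\setminus\gamma([0,1])}\textnormal{Wind}_{z_0}(\gamma)\,\lambda(dz_0)
+\int_{B_R\cap\gamma([0,1])}(\dots)\,\lambda(dz_0).
\]
Two further observations finish the argument. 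The winding number vanishes outside $B_R$ when $R$ exceeds $\sup|\gamma|$, because $z_0$ then lies in the unbounded component. The trace $\gamma([0,1])$ of a rectifiable curve has planar measure zero (it is covered by $n$ balls of radius $\mathrm{Var}(\gamma)/n$), so the second integral vanishes.

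The main obstacle is the Fubini/measure-zero step together with the claim that the $d\arg$ integral equals $2\pi$ times the integer winding number for merely bounded-variation closed curves. For the latter we would lift $\arg(\gamma_t-z_0)$ continuously and use the chain rule for BV functions composed with smooth maps away from $z_0$.
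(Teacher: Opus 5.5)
Your proposal is correct and follows the paper's direct route: Fubini's theorem combined with the potential-theoretic identity $\frac{1}{\pi}\int_{B_R}\frac{z-z_0}{|z-z_0|^2}\,\lambda(dz_0)=z$ for $|z|<R$. Your $h_R(z)=\tfrac12 z$ is the same identity with normalization $\frac{1}{2\pi}$, and you add details the paper leaves implicit (the domination by $|\gamma_t-z_0|^{-1}|d\gamma_t|$, the null trace, and the vanishing of the winding number outside $B_R$); the step you flag as the main obstacle is lighter than you fear, because with the paper's definition of $\textnormal{Wind}_{z_0}$ the identity only needs $\int_\gamma d\log|z-z_0|=0$ for closed $\gamma$, not integrality.
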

\begin{proof}
This follows from the more general Cufí--Verdera  formula  \cite[Theorem 1]{CV2015}, or can simply be seen directly by Fubini's theorem and the well-known potential-theoretic formula
$$
\frac{1}{\pi} \int_{B_R} \frac{z-z_0}{{|z-z_0|}^2} \lambda(dz_0)  = z
$$
for a disc $B_R \subset \mathbb{C}$ around zero with radius $ R > |z|$.
\end{proof}

\section{Trees and tree-like paths}\label{sec:tree-like}

Recall that a closed curve on a finite tree splits into elementary excursions: whenever the walk enters a branch, it must later leave that branch along the same arc. General compact trees are treated with the same intuition. 

\begin{definition}[Compact tree]
A compact tree is a compact connected metric space $T$ such that any two points $a,b\in T$ are connected by a unique arc, i.e., a unique, injective continuous curve. This arc is denoted by $[a,b]$.
\end{definition}

\begin{definition}[Tree-like path]
A continuous bounded variation path $ \gamma:[0,1]\to\R^d $ is called tree-like if there exist a compact tree $ T $, a continuous loop
\[
  \tau:[0,1]\to T,
  \qquad \tau(0)=\tau(1),
\]
and a continuous map $ \psi:T\to\R^d $ such that
\[
  \gamma=\psi\circ\tau.
\]
In particular, $ \gamma(1)=\gamma(0) $. Analogously, we speak of tree-like paths on $[s,t]$.
\end{definition}

Tree-like paths can also be described by so-called height functions \cite{HL2010}. Here, we adopt an equivalent definition following \cite{BGLY2016}, which better fits our geometric viewpoint. See also \cite[Theorem 5.15]{L2017} for other equivalent definitions of tree-like paths.

\begin{remark}\label{openmap}
If a continuous bounded variation path $ \gamma:[0,1]\to\R^d $ factors over a continuous loop $ \tau:[0,1] \to T$ through $ \gamma= \psi \circ \tau$, then $\psi|_{\tau([0,1])}$ is automatically continuous.
\end{remark}

We now record the elementary two-dimensional lemma used in the subsequent induction over signature levels.

\begin{lemma}[Tree-like planar curves have vanishing Lévy area]\label{lem:2DTreeLikeLemma}
Let $\beta:[0,1]\to\R^2$ be a continuous bounded variation path. If $\beta$ is tree-like, then
\[
  \Sig^{\leq 2}_{0,1}(\beta)=\id \, .
\]
Equivalently, $\beta$ is a loop  with vanishing Lévy area: $A(\beta) = 0$. 
\end{lemma}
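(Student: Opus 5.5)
First I reduce the statement to a single claim. Since $\beta=\psi\circ\tau$ with $\tau(0)=\tau(1)$, we have $\beta(0)=\beta(1)$, so the first level of $\Sig_{0,1}(\beta)$ vanishes. For a closed path, integration by parts gives $S^{ii}_{0,1}=\tfrac12(\beta^i_1-\beta^i_0)^2=0$. It also gives $S^{12}_{0,1}+S^{21}_{0,1}=(\beta^1_1-\beta^1_0)(\beta^2_1-\beta^2_0)=0$. The second level therefore reduces to $S^{12}_{0,1}=-S^{21}_{0,1}=A(\beta)$, and the two formulations of the lemma are equivalent. It remains to show $A(\beta)=0$.

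By Proposition \ref{prop:levyarea_winding}, it suffices to show that $\wind_{z_0}(\beta)=0$ for every $z_0\in\mathbb{C}\setminus\beta([0,1])$. I will show this with a lifting argument through the tree.

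Fix such a $z_0$ and consider the continuous map $f:=\psi-z_0 : K\to\mathbb{C}\setminus\{0\}$, where $K:=\tau([0,1])\subset T$. By Remark \ref{openmap}, $\psi|_K$ is continuous. The set $K$ is a compact connected subset of a compact tree, so it is again a compact tree, hence a dendrite. The key point is that a continuous nonvanishing function on a compact tree admits a continuous logarithm. Equivalently, $f/|f|:K\to S^1$ lifts through the covering $\R\to S^1$.

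Given such a lift $g:K\to\R$ with $f/|f|=e^{2\pi\mathrm{i} g}$, the function $g\circ\tau$ is a continuous lift of $\arg(\beta-z_0)/(2\pi)$ along $[0,1]$. For a bounded variation path, the winding number equals the net change of a continuous argument lift. Hence
\[
\wind_{z_0}(\beta)=g(\tau(1))-g(\tau(0))=0,
\]
because $\tau(1)=\tau(0)$. Integrating over $z_0$ via \eqref{eq:LevyvsWinding} gives $A(\beta)=0$. This is justified because $\beta([0,1])$ has planar measure zero, being a rectifiable curve, and the winding number is bounded and compactly supported, so the integral is well defined.

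\textbf{Main obstacle: existence of the lift $g$ on a general compact tree.} I would argue that $K$ is contractible, which gives the lift by standard covering theory since $K$ is compact metric and locally path-connected.

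The contraction is the following. Fix a root $r\in K$. For $x\in K$ and $s\in[0,1]$, let $H(x,s)$ be the point on the arc $[r,x]$ at parameter fraction $s$ with respect to an arc-length-like parametrisation. Continuity of $H$ is delicate, because compact trees need not be metric trees with geodesic parametrisations.

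If this becomes too technical, I would use a more elementary route. By uniform continuity, choose $\delta>0$ such that $|f|$ is bounded below on $K$ and $f/|f|$ varies by less than $\pi/2$ on sets of diameter less than $\delta$. Then cover $K$ by finitely many small connected subsets and take a finite subtree skeleton. On a finite tree the lift is built edge by edge from the root, with no cycles to obstruct consistency. Finally, one checks that $\tau$ can be replaced up to a small error by a loop in that finite tree with the same winding number around $z_0$. Closing this approximation step rigorously is where I expect the real work to lie.
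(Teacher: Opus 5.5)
Your proposal is essentially the paper's proof: closedness and the shuffle identity reduce the claim to $A(\beta)=0$, which follows from Proposition~\ref{prop:levyarea_winding} once $\wind_{z_0}(\beta)=0$ for all $z_0\notin\beta([0,1])$. Your continuous logarithm of $\psi-z_0$ on $K=\tau([0,1])$ is the same topological input as the paper's appeal to Cauchy's theorem for a null-homotopy of $\beta$ within its own range, which the paper cites from \cite{L2017}. The step you flag closes without any approximation. $K$ is a continuous image of $[0,1]$ with unique arcs, hence a dendrite; local connectedness comes from being such an image, not from being a compact connected subset of $T$. So either cite that dendrites are contractible, or define $g(x)$ by lifting $f/|f|$ along the unique arc $[r,x]$. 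Continuity then follows because $[r,x]$ and $[r,y]$ differ only by subarcs of $[x,y]$, and nearby points of a Peano continuum are joined by arcs of small diameter.
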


\begin{proof}
  Since $\beta$ is tree-like, it factors as $\beta=\psi\circ\tau$, where $\tau:[0,1] \to T$ is a loop in a compact tree. Hence $\beta(1)=\beta(0)$, and the first level of the signature vanishes.

It remains to consider the second level. We now interpret $\beta$ as a curve in $\mathbb{C}$. Let $ z_0 \notin \beta([0,1]) $. By Cauchy's theorem of complex analysis, $\int_\beta \frac{1}{z-z_0} d z=0$, since $\beta$ can be retracted to a point within its own range by being tree-like \cite[Theorem 5.15]{L2017}. On the other hand, we have for all closed, continuous bounded variation curves
\[
2 \pi \textrm{i} A(\beta) = \int_{\mathbb{C} \setminus \beta([0,1])} \int_{\beta} \frac{1}{z-z_0} d z \, \lambda(dz_0) \, ,
\]  
by Proposition \ref{prop:levyarea_winding}. Hence the L\'evy area $A(\beta)$ vanishes.

The symmetric part of the second level is determined by the identity:
\[
   \Sig^{ij}_{0,1}(\beta)+  \Sig^{ji}_{0,1}(\beta)
  =
  \Sig^i_{0,1}(\beta)  \Sig^j_{0,1}(\beta)=0 \, .
\]
Together with the vanishing of the anti-symmetric part, this gives $\Sig^{ij}_{0,1}(\beta)=0$ for all $i,j\in\{1,2\}$. Hence $\Sig^{\leq 2}_{0,1}(\beta)=\id$.
\end{proof}

\begin{proposition}[Tree-likeness of signature curves]\label{prop:lifts-tree-like}
If $ \gamma:[0,1]\to\R^d $ is tree-like, i.e., it factors over a continuous loop $ \tau:[0,1] \to T $ for some compact tree $T$, then for every $N\geq 1$, the truncated signature curve
\[
  t\longmapsto \Sig^{\leq N}_{0,t}(\gamma)
\]
factors over the same loop $\tau: [0,1] \to T$.
\end{proposition}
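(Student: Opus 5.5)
Since $\gamma$ is tree-like, the first level already factors through $\tau$: $\Sig^{\leq 1}_{0,t}(\gamma)=(1,\gamma_t-\gamma_0)=(1,\psi(\tau(t))-\psi(\tau(0)))$. We therefore proceed by induction on $N$. The inductive hypothesis is that there is a map $\Psi_N:T\to T^{(N)}(\R^d)$ with $\Sig^{\leq N}_{0,t}(\gamma)=\Psi_N(\tau(t))$ for all $t$. We must show that every level-$(N+1)$ coordinate $S^{w i}_{0,t}(\gamma)$, for $|w|=N$, is a function of $\tau(t)$. Equivalently, we must show that $\tau(s)=\tau(t)$ implies $S^{wi}_{0,s}=S^{wi}_{0,t}$. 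By Remark \ref{openmap} and compactness, continuity of the factor map is then automatic, because $\tau$ is a quotient map onto its image, which is compact.

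Fix $s<t$ with $\tau(s)=\tau(t)$. The restriction $\tau|_{[s,t]}$ is then a loop in the compact tree $T$. Let $w=i_1\cdots i_N$ and consider the planar curve
\[
  \beta_u:=\bigl(S^{w}_{0,u}(\gamma),\ \gamma^i_u\bigr),\qquad u\in[s,t].
\]
This curve is continuous and of bounded variation. Its first coordinate factors through $\tau$ by the inductive hypothesis, and its second coordinate factors as $\psi^i\circ\tau$. Hence $\beta|_{[s,t]}=\Phi\circ\tau|_{[s,t]}$ for a map $\Phi$ defined on $\tau([0,1])$, which is continuous by Remark \ref{openmap}. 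One small point needs care: the tree in the definition should be $\tau([s,t])$, or $T$ itself with $\Phi$ extended. A continuous image of an interval inside a compact tree is again a compact tree, being a connected closed subset in which arcs stay unique. Alternatively, we extend $\Phi$ continuously to all of $T$ by Tietze, since $\tau([0,1])$ is closed. In either case $\beta$ is tree-like on $[s,t]$.

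Lemma \ref{lem:2DTreeLikeLemma} now gives $\Sig^{\leq 2}_{s,t}(\beta)=\id$, and in particular the component with word $12$ vanishes:
\[
  0=\int_{s<u_1<u_2<t} d\beta^1_{u_1}\,d\beta^2_{u_2}=\int_s^t \bigl(S^w_{0,u}-S^w_{0,s}\bigr)\,d\gamma^i_u .
\]
Since $\beta$ is a loop, $\int_s^t d\gamma^i_u=\gamma^i_t-\gamma^i_s=0$. Therefore
\[
  \int_s^t S^w_{0,u}(\gamma)\,d\gamma^i_u=0 .
\]
By the definition of iterated integrals, $S^{wi}_{0,t}-S^{wi}_{0,s}=\int_s^t S^w_{0,u}\,d\gamma^i_u$, so $S^{wi}_{0,t}=S^{wi}_{0,s}$, which completes the induction step.

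The main obstacle is the step that uses the Lévy area vanishing. Everything else is bookkeeping, but that step needs the planar curve $\beta$ to be genuinely tree-like, with a continuous factor map on a compact tree. The argument above ensures this through Remark \ref{openmap} and the restriction to $\tau([s,t])$. With that in place, the key identity $\int_s^t S^w\,d\gamma^i=0$ follows directly from Lemma \ref{lem:2DTreeLikeLemma}.
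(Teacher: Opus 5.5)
Your proof is correct and follows essentially the paper's argument. Both proceed by induction on $N$, project to a planar tree-like curve pairing a level-$N$ coordinate with $\gamma^i$, and use Lemma~\ref{lem:2DTreeLikeLemma} to kill its $(1,2)$ iterated integral. The differences are minor. You use $(S^w_{0,u},\gamma^i_u)$ and read off $S^{wi}_{0,t}-S^{wi}_{0,s}=0$ directly, whereas the paper uses the increment curve $(S^w_{s,u},\gamma^j_u-\gamma^j_s)$ to get $\Sig^{\leq N+1}_{s,t}(\gamma)=\id$ and then applies Chen's identity. Your Tietze/restriction remark also makes explicit the extension of the factor map to all of $T$, which the paper leaves implicit.
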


\begin{proof}
We argue by induction on $N$. For $N=1$ the lifted path is $t\mapsto \gamma(t)-\gamma(0)$. Thus it is tree-like by assumption and factors over the loop $\tau:[0,1] \to T $.

Assume that
\[
  X_t:=\Sig^{\leq N}_{0,t}(\gamma)
\]
is tree-like factoring over the same loop $ \tau:[0,1] \to T$ as $\gamma$ does. Fix $s<t$ such that $\tau(s)=\tau(t)$; then $\tau|_{[s,t]}$ is a loop in $T$, too. The increment path
\[
  Y_u:=\Sig^{\leq N}_{s,u}(\gamma) = \proj{N}(\Gamma_s^{-1} \Gamma_u),
  \qquad u\in[s,t],
\]
is  tree-like  in \(T^{(N)}(\R^d)\), factoring over this loop $ \tau|_{[s,t]}$. We must prove that every component of level $N+1$ from $\Sig_{s,t}(\gamma)$ vanishes, too.

Let $ w=i_1\cdots i_N$ be a word of length $N$, and let $ j\in\{1,\ldots,d\}$ be a letter. Consider the 
two-dimensional coordinate projection of the lifted and restricted loop $Y$ via 
\[
  P_{w,j}:T^{(N)}(\R^d)\to\R^2,
  \qquad
  P_{w,j}(a):=(a^w,a^j).
\]
Since $Y$ is tree-like, the planar bounded variation path
\[
  Z_u:=P_{w,j}(Y_u)
  =\bigl(S^w_{s,u}(\gamma),S^j_{s,u}(\gamma)\bigr)
  =\bigl(S^w_{s,u}(\gamma),\gamma^j(u)-\gamma^j(s)\bigr) \in \R^2
\]
is tree-like, too,  factoring over the continuous loop $ \tau|_{[s,t]}$. By   Lemma~\ref{lem:2DTreeLikeLemma}, 
its second-order signature vanishes. In particular, its $(1,2)$-component is zero:
\[
  0
  =S^{12}_{s,t}(Z)
  =\int_{s<u<v<t} dZ^1_u\,dZ^2_v
  =\int_s^t Z^1_v\,dZ^2_v = S^{wj}_{s,t}(\gamma) \, .
\]
Since $wj$ is an arbitrary word of length $N+1$, all level $ N+1 $ coordinates of $ \Sig_{s,t}(\gamma) $ vanish. Hence the level $ N+1 $ lift has the property that $ \Sig^{\leq N+1}_{s,t}(\gamma) = \id $, i.e., is trivial. This means that $ \Sig^{\leq N+1}_{0,s}(\gamma) = \Sig^{\leq N+1}_{0,t}(\gamma) $ by Chen's identity.

Thus we have proven that if $\tau(s) = \tau(t)$ for $ s < t $, then  $ \Sig^{\leq N+1}_{0,s}(\gamma) = \Sig^{\leq N+1}_{0,t}(\gamma) $. Therefore, $ t\longmapsto \Sig^{\leq N+1}_{0,t}(\gamma) $ factors over the loop $\tau$, too. The associated map $\psi $ is continuous by Remark \ref{openmap}.
\end{proof}

\section{The sub-interval lemma and the construction of the factoring tree}\label{sec:subinterval_lemma}

We next prove the converse direction, i.e., a path with trivial signature is tree-like (see \cite{BGLY2016}, but there in greater generality). The subsequent lemma contains the key idea and states that a path with trivial total signature must contain a nontrivial subpath, i.e., a restriction on a nontrivial sub-interval $[s,t] \subset [0,1]$, where the total signature is trivial, too.

\begin{lemma}[Sub-interval lemma]\label{lem:subinterval}
Let $\gamma:[0,1]\to\R^d$ be continuous of bounded variation, and suppose
\[
  \Sig_{0,1}(\gamma)=\id \, .
\]
Then there exist \(0\leq s<t\leq 1\) with $ t-s < 1$, i.e., a proper sub-interval, such that
\[
  \Sig_{s,t}(\gamma)=\id.
\]
\end{lemma}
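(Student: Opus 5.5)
We argue by contradiction and use a winding-number argument for a suitable planar projection of the signature curve $\Gamma_t=\Sig_{0,t}(\gamma)$. Assume that $\Sig_{s,t}(\gamma)\neq\id$ for every proper sub-interval $[s,t]\subsetneq[0,1]$ with $s<t$. By Chen's identity (Proposition~\ref{prop:chen}), $\Sig_{s,t}(\gamma)=\Gamma_s^{-1}\Gamma_t$. So the assumption says that $\Gamma_s\neq\Gamma_t$ for all $0\le s<t\le 1$ with $(s,t)\neq(0,1)$, while $\Gamma_0=\Gamma_1=\id$. The map $[0,1]/\{0\sim1\}\to\Gamma([0,1])$ is then a continuous bijection from a compact space onto a Hausdorff space, hence a homeomorphism. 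Therefore $K:=\Gamma([0,1])$ is a Jordan curve in $T((\R^d))$.

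Fix a homeomorphism $h=(h_1,h_2):K\to S^1\subset\R^2$ such that $u\mapsto h(\Gamma_u)$ winds exactly once around $0$, for instance $h(\Gamma_u)=(\cos 2\pi u,\sin 2\pi u)$. By Proposition~\ref{prop:SW}, choose $\ell_1,\ell_2\in T(\R^d)$ with $|\ell_i-h_i|<1/4$ uniformly on $K$. Define the planar curve
\[
\beta_u:=\bigl(\ell_1(\Gamma_u),\ell_2(\Gamma_u)\bigr).
\]
It is a closed continuous bounded variation curve, since each $\Gamma^w$ is of bounded variation. It stays within distance $1/2$ of $h(\Gamma_u)\in S^1$, so it avoids the origin. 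The straight-line homotopy between $\beta$ and $h\circ\Gamma$ also avoids the origin, hence
\[
\wind_0(\beta)=1 .
\]

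Next we show that $\wind_0(\beta)=0$, which gives the contradiction. Consider the compact set $\beta([0,1])\subset\R^2\setminus\{0\}$ and the function $1/z$ on it. By the real Stone--Weierstrass theorem applied to real and imaginary parts, for any $\delta>0$ there is a complex polynomial $p$ in the real variables $x,y$ with $|p-1/z|<\delta$ on $\beta([0,1])$. The length $L$ of $\beta$ is finite and fixed before $\delta$ is chosen, so
\[
\Bigl|\int_\beta \frac{dz}{z}-\int_\beta p(x,y)\,dz\Bigr|\le \delta L .
\]
Now $p(\beta_u)$ is a polynomial in $\ell_1(\Gamma_u),\ell_2(\Gamma_u)$. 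By the shuffle (Leibniz) argument used in Proposition~\ref{prop:SW}, it equals $\ell'(\Gamma_u)$ for some $\ell'\in T(\R^d)$. Hence $\int_\beta p\,dz=\int_0^1 \ell'(\Gamma_u)\,d(\ell_1+\mathrm{i}\ell_2)(\Gamma_u)$. By Lemma~\ref{lem:polynomial-one-form}, this is $\ell(\Gamma_1)$ for some $\ell$ not involving the empty word. Since $\Gamma_1=\id$, it vanishes. Letting $\delta\to0$ gives $\wind_0(\beta)=0$, contradicting $\wind_0(\beta)=1$.

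The main obstacle is this last step. Area-type quantities such as the Lévy area, which through Proposition~\ref{prop:levyarea_winding} would be the natural choice, are not controlled by uniform closeness of $\beta$ to the circle, because $\beta$ might wind wildly in a thin annulus. The winding number avoids this problem, provided the order of choices is respected: first $\ell_1,\ell_2$, which fixes $\beta$ and its length $L$; only then the polynomial approximation of $1/z$ with $\delta L$ small.
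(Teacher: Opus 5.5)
Your proof is correct and follows essentially the same route as the paper. Under the contradiction hypothesis $\Gamma$ becomes a Jordan curve, Stone--Weierstrass (Proposition~\ref{prop:SW}) yields a linear planar projection with nonzero winding number, and approximating $dz/z$ (equivalently the angular form $\omega$) by polynomial one-forms makes that winding number a limit of nonempty-word total signature coordinates, which vanish by Lemma~\ref{lem:polynomial-one-form}; you are in fact a bit more explicit than the paper about the $\delta L$ error bound and the order of choices (fixing $\ell_1,\ell_2$, hence the length $L$, before the polynomial approximation).
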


\begin{proof}
Assume, by contradiction, that there is no proper sub-interval with trivial signature. Set
\[
  \Gamma_t:=\Sig_{0,t}(\gamma).
\]
Then $\Gamma_0=\Gamma_1 = \id $ in view of  $ \Sig_{0,1}(\gamma)=\id $. Moreover, if $\Gamma_s=\Gamma_t$ for $ s < t$ and $t-s <1$, then by  Chen's identity,
\[
  \Sig_{s,t}(\gamma)=\Gamma_s^{-1}\Gamma_t= \id \, .
\]
By assumption, this cannot happen. Hence $\Gamma$ defines a simple closed loop, i.e., an injective map,
\[
  \Gamma:S^1\to T((\R^d)), 
\]
where $S^1 $ denotes the $1$-sphere in $\R^2$. Since $S^1$ is compact, $\Gamma$ is a homeomorphism onto its image. Let
\[
  h:\Gamma(S^1)\to S^1\subset\R^2
\]
be its inverse.

The coordinate functions of $h=(h_1,h_2)$ are continuous on the compact set $\Gamma(S^1)$. By Proposition \ref{prop:SW}, the two coordinate functions of $h$ can be uniformly approximated 
by the restriction of linear functionals on $\Gamma(S^1)$. Hence there is a continuous linear map
\[
  l:T((\R^d))\to\R^2
\]
such that $l\circ \Gamma: S^1\to \R^2$ 
is uniformly close to the identity map on $S^1$. Choosing the approximation sufficiently close, the planar curve $ l \circ \Gamma $ stays away from the origin and has nonzero winding number:
\[
  \wind_0( l \circ\Gamma)\neq 0.
\]
An illustration is given in Figure~\ref{fig:winding_approximation}. 
The winding number is determined by integrating the angular one-form 
\[
  \omega=\frac{x\,dy-y\,dx}{x^2+y^2}.
\]
Since $ l \circ \Gamma $ remains in a compact subset of $\mathbb{R}^2\setminus\{0\}$, the form $\omega$ can be uniformly approximated there by polynomial one-forms. Applying Proposition \ref{prop:SW}, namely that the linear functionals on $\Gamma(S^1)$ form an algebra, we finally obtain linear functionals $\ell_1^k, \ell_2^k \in T(\mathbb{R}^d)$, such that the approximate winding number can be written as
\[
\sum_k  \int_0^1 \ell_1^k(\Gamma_t) \, d \ell_2^k(\Gamma_t) \neq 0 \, .
\]

By Lemma~\ref{lem:polynomial-one-form}, these integrals, however, are a finite linear combination of total signature coordinates of $\gamma$ for nonempty words, hence are determined by $\Sig_{0,1}(\gamma)$. Since $ \Sig_{0,1}(\gamma)=\id $, all signature components for nonempty words vanish, whence the integrals have to vanish, too, which is the desired contradiction.

Therefore, a proper sub-interval $ [s,t] $ with $ \Sig_{s,t}(\gamma)=\id $ must exist.
\end{proof}

\subsection*{Factoring Tree Construction}

Recall the signature range  
$$\Gamma([0,1]) = \{\Gamma_t, t\in [0,1]\} \subset T((\R^d))$$ introduced in Proposition~\ref{prop:SW}. 

\begin{figure}[t]
    \centering
    \begin{tikzpicture}[scale=1.05,
        ->-/.style={postaction={decorate,decoration={
            markings,
            mark=at position #1 with {\arrow[scale=1.5]{stealth}}
        }}}
    ]
        \fill (0,0) circle (2pt) node[below right] {$0$};

        \draw[->-={0.135}, thick, black, densely dashed] (0,0) circle (2cm);
        \node[black, right] at (1.75,1.05) {$S^1$};

        \draw[
            ->-={0.383},
            thick,
            black,
            domain=0:360,
            samples=300,
            variable=\t,
            line join=round
        ]
        plot (
            {\t + ((\t>36)*(\t<84)
                 + (\t>180)*(\t<216)
                 + (\t>240)*(\t<288))
                 *10*sin(15*\t)}
            :
            {2 + 0.14*sin(10*\t) + 0.13*cos(15*\t)}
        );

        \node[black, above left] at (-1.8,0.99) {$l \circ \Gamma$};
    \end{tikzpicture}
    \caption{Map $l \circ \Gamma$ (solid) approximating the standard parametrization
    of $S^1$ (dashed) in the proof of Lemma~\ref{lem:subinterval} and preserving
    the non-zero winding number around the origin.}
    \label{fig:winding_approximation}
\end{figure}

\begin{theorem}[Existence of a factoring tree]\label{thm:quotient-tree}
Let $ \gamma:[0,1]\to\R^d $ be a continuous path of bounded variation such that $ \Sig_{0,1}(\gamma)=\id $. Then, 
$$ T_\gamma := \Gamma([0,1]) \subset T((\R^d))$$
is a compact tree. Moreover, there is a continuous map $  \psi:T_\gamma\to\R^d$ such that 
  $\gamma=\psi\circ \Gamma$. 
\end{theorem}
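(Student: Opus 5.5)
We must show three things: $T_\gamma=\Gamma([0,1])$ is a compact tree, $\gamma$ factors through $\Gamma$, and the factor map $\psi$ is continuous.

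\textbf{Factorization and continuity of $\psi$.} If $\Gamma_s=\Gamma_t$, then $\gamma(s)=\gamma(t)$, since the first level of $\Gamma_t$ is $\gamma(t)-\gamma(0)$. Hence
\[
\psi(a):=\gamma(0)+a^{(1)}
\]
works, and it is continuous, being the restriction of a continuous linear map. Compactness and connectedness of $T_\gamma$ are immediate, because it is a continuous image of $[0,1]$. The locally convex space $T((\R^d))$ is metrizable (it is a countable product of finite-dimensional spaces), so $T_\gamma$ is a compact metric space. Chen's identity gives $\Gamma_s=\Gamma_t$ if and only if $\Sig_{s,t}(\gamma)=\id$.

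\textbf{Arc-connectedness.} $T_\gamma$ is a Peano continuum: it is a Hausdorff continuous image of $[0,1]$. So any two points are joined by an arc. The real task is uniqueness of arcs, equivalently that $T_\gamma$ contains no subspace homeomorphic to $S^1$. A compact, connected, locally connected metric space with no simple closed curve is a dendrite, which is exactly a compact tree in the stated sense.

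\textbf{No simple closed curves (main step).} Suppose $C\subset T_\gamma$ is homeomorphic to $S^1$. I would mimic the proof of Lemma~\ref{lem:subinterval}. Let $h:C\to S^1$ be the homeomorphism. Extend each coordinate of $h$, via Tietze, to a continuous function on $T_\gamma$, and then to a map $F:T_\gamma\to\R^2$. Approximate $F$ uniformly by restrictions of linear functionals (Proposition~\ref{prop:SW}), obtaining a linear $l$. Now consider the closed curve $l\circ\Gamma$ on $[0,1]$, which is closed since $\Gamma_0=\Gamma_1=\id$. By Lemma~\ref{lem:polynomial-one-form}, integrals $\int\ell_1(\Gamma)\,d\ell_2(\Gamma)$ over any sub-interval $[s,t]$ with $\Gamma_s=\Gamma_t$ vanish, since they are linear functionals of $\Sig_{s,t}(\gamma)=\id$ in nonempty words.

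The obstacle is that $\Gamma$ need not traverse $C$ as a single loop, and $l\circ\Gamma$ may pass near the origin away from $C$. To handle this, I would instead use a retraction: the quotient map $r:T_\gamma\to C$ that collapses each component of $T_\gamma\setminus C$ to its attaching point. Showing that $r$ is well defined and continuous requires care; I would argue via local connectedness.

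Given $r$, the curve $h\circ r\circ\Gamma:[0,1]\to S^1$ is a closed curve. It should have nonzero degree, because $\Gamma$ must cover $C$ and cannot backtrack across points of $C$ in a tree-like manner. I am unsure of this step and would try to prove it by decomposing $[0,1]$ at the times where $\Gamma$ enters or leaves $C$. Making this rigorous is the step I expect to be hardest.

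Once nonzero degree is established, approximate $h\circ r$ by a linear map $l$ as above. The winding number of $l\circ\Gamma$ is then nonzero, but it is expressed through polynomial one-forms as a linear combination of total signature coordinates in nonempty words. These vanish because $\Sig_{0,1}(\gamma)=\id$, giving the contradiction.

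An alternative I would keep in reserve is to use Lemma~\ref{lem:subinterval} directly. Apply it to the restrictions $\gamma|_{[s,t]}$ with $\Gamma_s=\Gamma_t$, and show via a Zorn/maximal-interval argument that the identifications form a non-crossing family of intervals. The quotient of $[0,1]$ by such a family is a dendrite, namely the tree of a continuous height-type function. This non-crossing property is the combinatorial heart: if $\Gamma_s=\Gamma_t$ and $\Gamma_{s'}=\Gamma_{t'}$ with $s<s'<t<t'$, one would have to produce a contradiction.
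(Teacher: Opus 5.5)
\textbf{Verdict: genuine gap.} Your preliminary steps are sound and agree with the paper. The factor map $\psi(a)=\gamma(0)+a^{(1)}$ is exactly the paper's. Reducing the tree property to the absence of simple closed curves via Peano continua and dendrites is correct, and it even supplies the existence of arcs, which the paper leaves implicit.

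\textbf{Where the main step fails.} The paper dismisses a simple closed curve $C\subset T_\gamma$ in one sentence (``contradicts the sub-interval lemma''). You rightly see that this is not immediate, since $\Gamma$ need not run through $C$ as one loop. However, the route you sketch does not work as stated, for two reasons.
\begin{itemize}
\item[(i)] The retraction $r$ is not well defined in general. A component of $T_\gamma\setminus C$ may have closure meeting $C$ in several points (think of a theta-curve), and you cannot exclude this without already knowing the conclusion.
\item[(ii)] More seriously, the claim $\deg(h\circ r\circ\Gamma)\neq 0$ cannot be derived. Nothing purely topological about a loop covering $C$ forces a nonzero global degree: a loop may run once around $C$ and then run back. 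What forbids such behaviour is the vanishing of winding on the intermediate sub-loops, and your main argument never exploits it.
\end{itemize}

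\textbf{The reserve plan rests on a false claim.} Let $\gamma$ run along the segment from $0$ to $e_1$, forth and back twice, each leg on a quarter of $[0,1]$. Then $\Sig_{0,1}(\gamma)=\id$ and $\Gamma_0=\Gamma_{1/2}$, while $\Gamma_{1/4}=\Gamma_{3/4}\neq\Gamma_0$. So crossing identifications do occur, and there is no contradiction to produce.

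\textbf{The missing idea.} Use your own observation locally: winding integrals vanish over every $[s,t]$ with $\Gamma_s=\Gamma_t$. Use this to build a continuous argument function on $T_\gamma$.
\begin{itemize}
\item[1.] Take $l$ linear with $|l-h|<1/3$ on $C$ (Stone--Weierstrass on $C$).
\item[2.] Take $z_0$ with $|z_0|<1/3$ outside $l(T_\gamma)$. Such $z_0$ exists because $l\circ\Gamma$ is a planar curve of bounded variation, so its image is Lebesgue-null. This also removes your worry about $l\circ\Gamma$ passing near the origin.
\item[3.] Let $\theta$ be a continuous argument of $u\mapsto l(\Gamma_u)-z_0$. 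If $\Gamma_s=\Gamma_t$, then $\theta(t)-\theta(s)\in 2\pi\mathbb{Z}$.
\item[4.] Approximate the angular form around $z_0$ by polynomial one-forms on the compact set $l(T_\gamma)$. Lemma~\ref{lem:polynomial-one-form}, applied on $[0,s]$ and $[0,t]$ with the same $\ell$, shows that $\theta(t)-\theta(s)$ lies within any $\varepsilon$ of $\ell(\Gamma_t)-\ell(\Gamma_s)=0$. Hence $\theta(s)=\theta(t)$.
\item[5.] Since $\Gamma$ is a closed surjection, hence a quotient map, $\theta$ descends to a continuous $\vartheta$ on $T_\gamma$ with $l-z_0=|l-z_0|e^{\mathrm{i}\vartheta}$.
\item[6.] On $C$ the map $l-z_0$ lies within $2/3$ of $h$. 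The straight-line homotopy to $h$ therefore avoids $0$, so $h:C\to S^1$ would be null-homotopic. This is impossible for a homeomorphism.
\end{itemize}
This argument replaces both the retraction and the global degree count.
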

\begin{proof}
Let $a,b \in T_\gamma$ with $a \neq b$. We have to show that there is a unique (up to reparametrization) continuous injective arc connecting $a$ to $b$. If this were not the case, there would exist two distinct arcs $\alpha$ and $\beta$ connecting $a$ with $b$. Since they are different, there are proper subarcs $\alpha|_{[u_-,u_+]}$ and $\beta|_{[v_-,v_+]}$ such that $\alpha(u_-)=\beta(v_-)$, $\alpha(u_+)=\beta(v_+)$, but otherwise disjoint. Concatenating those subarcs leads to an injective loop in $T_\gamma$, which contradicts the sub-interval lemma. Therefore, $T_\gamma$ is a compact tree. Hence $\gamma$ is tree-like. 

 For the second assertion, we may assume without loss of generality that $\gamma(0) = 0$. We then define $\psi:T((\R^d))\to\R^d$ as the canonical projection of the signature onto its first level, i.e.,  the curve itself. 
\end{proof}

\section{Hambly-Lyons uniqueness theorem}

\begin{theorem}[Hambly--Lyons uniqueness]\label{thm:main}
Let $ \gamma:[0,1]\to\R^d $ be a continuous bounded variation path. Then
\[
  \Sig_{0,1}(\gamma)=\id
  \quad\Longleftrightarrow\quad
  \gamma \text{ is tree-like}.
\]
Consequently, the signature separates based continuous bounded variation paths up to tree-like equivalence, in the sense of Hambly--Lyons \cite{HL2010}.
\end{theorem}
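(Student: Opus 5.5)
The plan is to assemble the two directions from the results already established. First I will reduce to based paths: by shift invariance of the signature and of tree-likeness (a translate of $\psi$ is still continuous), I assume $\gamma(0)=0$ throughout.

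\emph{Tree-like $\Rightarrow$ trivial signature.} Suppose $\gamma=\psi\circ\tau$ with $\tau:[0,1]\to T$ a loop in a compact tree. Fix $N\geq 1$. By Proposition~\ref{prop:lifts-tree-like}, the truncated signature curve $t\mapsto\Sig^{\leq N}_{0,t}(\gamma)$ factors over $\tau$, say as $\psi_N\circ\tau$. Since $\tau(0)=\tau(1)$, this gives $\Sig^{\leq N}_{0,1}(\gamma)=\psi_N(\tau(1))=\psi_N(\tau(0))=\Sig^{\leq N}_{0,0}(\gamma)=\id$. As $N$ is arbitrary and $\Sig_{0,1}(\gamma)$ is determined by all of its truncations $\proj{N}\Sig_{0,1}(\gamma)$, it follows that $\Sig_{0,1}(\gamma)=\id$.

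\emph{Trivial signature $\Rightarrow$ tree-like.} Suppose $\Sig_{0,1}(\gamma)=\id$. By Theorem~\ref{thm:quotient-tree}, $T_\gamma=\Gamma([0,1])$ is a compact tree, and $\gamma=\psi\circ\Gamma$ with $\psi$ the continuous level-one projection. I then take $\tau:=\Gamma:[0,1]\to T_\gamma$. It is continuous, and it is a loop because $\Gamma_0=\id=\Sig_{0,1}(\gamma)=\Gamma_1$. Hence $\gamma$ is tree-like by definition. The substantive content here sits entirely in Theorem~\ref{thm:quotient-tree} and the sub-interval lemma (Lemma~\ref{lem:subinterval}); in this final assembly the only point needing care is to verify that $\tau$ really is a closed loop, which is exactly the hypothesis $\Gamma_1=\id$.

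\emph{Consequence for separation of paths.} Let $\gamma,\eta$ be based at $0$. By Chen's identity (Proposition~\ref{prop:chen}), together with the fact that the signature of a time-reversed path is the inverse, we have $\Sig_{0,1}(\gamma)=\Sig_{0,1}(\eta)$ if and only if the concatenation $\gamma\star\overleftarrow{\eta}$ has trivial signature. Applying the equivalence just proved to $\gamma\star\overleftarrow{\eta}$, this holds if and only if $\gamma\star\overleftarrow{\eta}$ is tree-like, which is the Hambly--Lyons notion of tree-like equivalence. Two routine checks remain: that the concatenation is again a continuous bounded variation path on $[0,1]$ after reparametrization (Proposition~\ref{prop:reparam}), and that the reversal identity holds. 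The latter I will take from the ODE description of the signature: running the defining equation backwards in time inverts the evolution.
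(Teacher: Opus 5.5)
Your proposal is correct and follows the paper's own proof. The forward direction uses Proposition~\ref{prop:lifts-tree-like} together with $\tau(0)=\tau(1)$ to get $\Sig^{\leq N}_{0,1}(\gamma)=\id$ for every $N$, and the converse takes the loop $\tau=\Gamma$ from Theorem~\ref{thm:quotient-tree}. Your treatment of the ``consequently'' clause via $\gamma\star\overleftarrow{\eta}$, which the paper leaves implicit, is also fine, provided the reversed path is translated to start at $\gamma(1)$ (the Hambly--Lyons convention for concatenation); otherwise the concatenation need not be continuous.
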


\begin{proof}
Assume first that $\gamma$ is tree-like. By Proposition~\ref{prop:lifts-tree-like}, for every $N\geq1$,  the lifted path
\[
  t\longmapsto \Sig^{\leq N}_{0,t}(\gamma)
\]
is tree-like and closed. Hence its endpoint agrees with its starting point, and therefore
\[
  \Sig^{\leq N}_{0,1}(\gamma)=\id
\]
for every $N$. Thus $\Sig_{0,1}(\gamma)=\id$.

Conversely, assume $\Sig_{0,1}(\gamma)=\id$. By Theorem~\ref{thm:quotient-tree}, $\gamma$ factors as
\[
  \gamma=\psi \circ \Gamma, 
\]
where $T_\gamma =  \Gamma([0,1])$ is a compact tree. 
Hence $\gamma$ is tree-like.
\end{proof}

\begin{remark}
The sub-interval lemma and its consequences carry over verbatim to the rough paths case leading to an elementary proof of the so-called difficult direction from trivial signature to tree-like in \cite{BGLY2016}. Notice that we use here the classical notion of winding numbers, which applies to  \emph{continuous} loops. 

To prove the other direction, namely that tree-like rough paths (see \cite{BGLY2016} for a definition) actually have trivial signatures, one cannot use our approach as outlined in Lemma \ref{lem:2DTreeLikeLemma}, since the Cufí--Verdera  formula  \cite[Theorem 1]{CV2015} does not hold beyond the Young setting. In the rough case, the proof presented in \cite{BGLY2016}, which relies on approximations by finite trees, is the easiest so far.
\end{remark}

\noindent \textbf{Disclaimer and Acknowledgement.} This research was initiated during Valentin Tissot-Daguette's PhD studies at Princeton University and continued during a visit of Walter Schachermayer at ETH Zurich. The research was partially funded by the Austrian Science Fund (FWF) 10.55776/P35519. AI was used for typesetting and language clean-up.

\end{document}